\newtheorem{example}{Example}[section]
\newtheorem{Def}[example]{Definition}
\newtheorem{Exam}[example]{Example}
\newtheorem{Prop}[example]{Proposition}
\newtheorem{Theo}[example]{Theorem}
\newtheorem{Lem}[example]{Lemma}
\newtheorem{Rem}[example]{Remark}
\newtheorem{Cond}[example]{Condition}
\def\inc{\widetilde{\in}}
\def\cIm{\operatorname{cIm}}
\def\Aut{\operatorname{Aut}}
\def\cGr{\mathsf{cGr}}
\def\Sets{\mathsf{Sets}}
\def\qed{\hfill $\Box$}
\def\Ker{\operatorname{Ker}}
\def\cKer{\operatorname{cKer}}
\def\NSCM/(A,B,\mu){\mathsf{NSCM/(A,B,\mu)}}
\def\NSGGd/G{\mathsf{NSGGd/G}}
\def\epsilon{\varepsilon}
\def\q{\quad}
\def\t{\text}
\begin{document}
\title{\bf Groups up to congruence relation and from categorical groups to c-crossed modules}

\author[1]{Tamar Datuashvili\thanks{\textbf{Corresponding author:} Tamar Datuashvili (e-mail: tamar.datu@gmail.com)}}
\affil[1]{\small{A. Razmadze Mathematical Institute of I. Javakhishvili Tbilisi State University, 6 Tamarashvii Str., Tbilisi 0177, Georgia}}
\author[2]{Osman Mucuk\thanks{O. Mucuk (e-mail : mucuk@erciyes.edu.tr)}}
\affil[2]{\small{Department of Mathematics, Erciyes University, 38039, Kayseri, TURKEY}}
\author[3]{Tun\c{c}ar \c{S}AHAN\thanks{T. \c{S}ahan (e-mail : tuncarsahan@gmail.com)}}
\affil[3]{\small{Department of Mathematics, Aksaray University, 68100, Aksaray, TURKEY}}

\date{}
\maketitle 

\begin{abstract}
We introduce a notion of c-group, which is a group up to congruence relation and consider the corresponding category. Extensions, actions and crossed modules (c-crossed modules) are defined in this category and the semi-direct product is constructed. We prove that each categorical group gives rise to c-groups and to a c-crossed module, which is a connected, special and strict c-crossed module in the sense defined by us. The results obtained here will be applied in the proof of an equivalence of the categories of categorical groups and connected, special and strict c-crossed modules.\\[3mm]
\noindent \footnotesize{\textbf{Keywords:} Group up to congruence relation $\cdot$ c-crossed module $\cdot$ action $\cdot$ categorical group.}\\
\noindent \footnotesize{\textbf{Mathematics Subject Classifcation (2010):} 20L99 $\cdot$ 20L05 $\cdot$ 18D35.}
\end{abstract}

\section{Introduction}

Our aim was to obtain for categorical groups an analogous description in terms of certain crossed module type objects as we have it for $\mathcal{G}$-groupoids  obtained by Brown and Spencer \cite{BS1}, which are strict categorical groups, or equivalently, group-groupoids or internal categories in the category of groups. Under categorical groups we mean a coherent 2-group in the sense of Baez and Lauda \cite{baez-lauda-2-groups}. It is important to note that it is well known that a categorical group is equivalent to a strict categorical group \cite{Sinh 2,JoyalStreet,baez-lauda-2-groups}, but we do not have an equivalence  between the corresponding categories.  This idea brought us to a new  notion of group up to congruence relation.  In this way we came to the definition of c-group and the corresponding category. Then we defined action in this category and introduced a notion of c-crossed module. Among of this kind of objects we distinguished connected, strict and special c-crossed modules denoted as cssc-crossed module. We proved that every categorical group gives rise to a cssc-crossed module. In the following paper we will prove that there is an equivalence between the category of categorical groups and the category of cssc-crossed modules. We hope that this result will give a chance  to consider for categorical groups the problems analogous to those considered and solved in the case of strict categorical groups in terms of group-groupoids and internal categories in \cite{BM1,Dat1,Dat2,Dat3,Dat4}.

In Section 2 we give a definition of a categorical group, group groupoid and crossed module in the category of groups. In Section 3  we give a definition of  group up to congruence relation, shortly c-group, give examples and consider the corresponding category of c-groups denoted as $\cGr$. We define $\cKer f$, $\cIm f,$ for any morphism $f$ in $\cGr,$ and normal c-subgroup in any c-group. In Section 4 we define split extension and action in $\cGr$. After this we give a definition of c-crossed module and give examples. We introduce the notions of special, strict and connected c-crossed modules and give examples. We prove that every categorical group defines a cssc-crossed module.

\section{Preliminaries}
Recall the definition of a monoidal category given by Mac Lane \cite{Mac71}.
\begin{Def}\label{moncat}
	A \textbf{monoidal category} is a category $\mathsf{C}=(C_0, C_1, d_0, d_1, i, m)$ equipped with a bifunctor $+\colon\mathsf{C}\times\mathsf{C}\rightarrow\mathsf{C}$ called the monoidal sum, an object $0$ called the zero object, and three natural isomorphisms $\alpha, \lambda$ and $\rho$. Explicitly, \[\alpha=\alpha_{x,y,z}\colon (x+y)+z\stackrel{\approx}{\rightarrow} x+(y+z)\] is natural for all $x,y,z\in C_0$, and the pentagonal diagram
	\[\xymatrix{
		((x+y)+z)+t  \ar[d]_{\alpha+1} \ar[r]^-{\alpha} & (x+y)+(z+t)  \ar[r]^-{\alpha} & x+(y+(z+t)) \\
		(x+(y+z))+t  \ar[rr]_-\alpha & & x+((y+z)+t) \ar[u]^{1+\alpha} }\]
	commutes for all $x,y,z,t\in C_0$. Again, $\lambda$ and $\rho$ are natural $\lambda_x\colon 0+x\stackrel{\approx}{\rightarrow} x,$ $\rho_x\colon x+0\stackrel{\approx}{\rightarrow} x,$ for all $x\in C_0$, the diagram
	\[\xymatrix{
		(x+0)+y  \ar[r]^-{\alpha} \ar[d]_-{\rho+1} &  x+(0+y)  \ar[d]^-{1+\lambda} \\
		x+y  \ar@{=}[r] & x+y }\]
	commutes for all $x,y\in C_0$ and also $\lambda_0=\rho_0\colon 0+0 \stackrel{\approx}{\rightarrow} 0$. Moreover "all" diagrams involving $\alpha,\lambda$, and $\rho$ must commute.
\end{Def}

In this definition we use the term monoidal sum and denote it as $+,$ instead of monoidal product, used in the original definition, and write the operation additively. From the definition it follows that $1_0+f\approx f+1_0 \approx f,$ for any morphism $f.$
In what follows the isomorphisms $\alpha, \lambda$ and $\rho$ involved in group like identities, their inverses, compositions and their monoidal sums will be called as \emph{special isomorphisms}.
Since $+$ is a bifunctor in a monoidal category we have $d_j(f+g)=d_j(f)+d_j(g), j=0,1, i(x+y)=i(x)+i(y)$ and interchange law $(f'+g')(f+g)=f'f+g'g,$ whenever the composites $f'f$ and $g'g$ are defined, for any $x, y\in C_0, f,f',g,g'\in C_1$.

Any category $\mathsf{C}$ with finite products can be considered as a monoidal category where $+$ assigns any two objects to their product and $0$ is the terminal object. The category of abelian groups $\mathsf{Ab}$ is a monoidal category where tensor product $\otimes$ is monoidal sum and $\mathbb{Z}$ is the unit object. There are other examples as well \cite{Mac71}.

In a monoidal category, if the special isomorphisms $\alpha, \lambda$, and $\rho$ are identities, then $\mathsf{C}$ is called a \textbf{strict monoidal category}.

Let $\mathsf{C}$ and $\mathsf{C'}$ be two monoidal categories. A \emph{(strict) morphism} of monoidal categories $T:(C,+,0,\alpha,\lambda,\rho)\rightarrow (C',+',0',\alpha',\lambda',\rho')$ is a functor $T:C\rightarrow C'$, such that for all objects $x, y, z$ and morphisms $f$ and $g$ there are equalities $T(x+y)=Tx+'Ty, T(f+g)=Tf+'Tg, T0=0', T\alpha_{x,y,z}=\alpha'_{Tx,Ty,Tz}, T\lambda_x=\lambda'_{Tx}, T\varrho_x=\rho'_{Tx}.$

\begin{Def}\label{invobj}\cite{baez-lauda-2-groups}
	If $x$ is an object in a monoidal category, an inverse for $x$ is an object $y$ such that $x+y\approx 0$ and $y+x\approx 0.$ If $x$ has an inverse, it is called invertible.
\end{Def}

As it is noted in \cite{baez-lauda-2-groups,JoyalStreet} and it is easy to show, that if any object has a one side inverse in a monoidal category, then any object is invertible.

\begin{Def}\label{catgrp}
	A \textbf{categorical group} $\mathsf{C}=(C_0, C_1, d_0, d_1, i, m)$ is a monoidal groupoid,  where all objects are invertible and moreover, for every object $x\in C_0$ there is an object $-x\in C_0$ with a family of natural isomorphisms
	\[\epsilon_x\colon -x+x\approx 0,\]
	\[\delta_x\colon x+(-x)\approx 0\] such that the following diagrams are commutative:
	\[\xymatrix@C=2.3pc{
		0 + x \ar[r]^<<<<<<{\delta^{-1}_x + 1} \ar[d]_{\lambda _x}
		& (x + (-x)) + x \ar[r]^<<<<<{a_{x, -x ,x}}
		& x+( -x + x) \ar[d]^{1_x + \epsilon_x} \\
		x \ar[rr]_{\rho ^{-1}_x}
		& & x + 0  }
	\]
	
	\[
	\xymatrix@C2.3pc{
		-x + 0
		\ar[r]^<<<<<<{1 + \delta^{-1}_x}
		\ar[d]_{\rho _{-x}}
		& -x + (x + (-x))
		\ar[r]^{a^{-1}_{-x, x, -x}}
		& (-x + x)+ (-x)
		\ar[d]^{\epsilon_x + 1_{-x}} \\
		-x
		\ar[rr]_{\lambda^{-1}_{-x}}
		&& 0 + (-x)   }
	\]
\end{Def}

It is important, and  a well-known fact, that from the definition of categorical group it follows that for any morphism $f:x\rightarrow x' \in C_1$ there is a morphism $-f\colon -x\rightarrow -x'$ with natural isomorphisms $-f+f\approx 0$ and $f+ (-f)\approx 0,$ where $0$ morphism is $1_0$ (see e.g. \cite{Sinh 1}).
Like the case of monoidal category the natural transformations $\alpha,\lambda$, $\rho, \epsilon, \delta,$ identity transformation $1_ \mathsf{C}\rightarrow 1_
\mathsf{C},$ their compositions and sums will be called \emph{special isomorphisms}. Categorical group defined above is coherent \cite{Laplaza,baez-lauda-2-groups}, which means that all diagrams commute involving special isomorphisms. For a monoidal category one can see in \cite{Mac71}, Coherence Chapter VII Section 2.

A categorical group is called strict if the special isomorphisms $\alpha$, $\lambda$, $\rho$, $\epsilon,$ and $\delta$ are identities. Strict categorical groups are known as group-groupoids (see below the definition), internal categories in the category of groups or 2-groups in the literature.

The definition of categorical group we gave is Definition 7 by Baez and Lauda in \cite{baez-lauda-2-groups}, where the operation is multiplication and which is called there coherent 2-group. Sinh \cite{Sinh 1} calls them ``gr-categories"  and this name is also used by other authors as well, e.g. Breen \cite{Breen}. It is called ``categories with group structure" by Ulbrich \cite{Ulbrich} and Laplaza \cite{Laplaza} in which all morphisms are invertible. The term categorical group for strict categorical groups is used by Joyal and Street \cite{JoyalStreet}, and it is used by Vitale \cite{Vitale1,Vitale2} and others for non strict ones.

From the functorial properties of addition $+$ it follows that in a categorical group we have $-1_x=1_{-x},$ for any $x\in C_0.$ Since an isomorphism between morphisms $\theta :f\thickapprox g$ means that there exist isomorphisms $\theta_i:d_i(f)\rightarrow d_i(g), i=0,1$ with $\theta_1 f=g\theta_0$, from natural property of special isomorphisms there exist special isomorphisms between the morphisms in $C_1$. But if $\theta_i, i=0,1$ are special isomorphisms, it doesn't imply that $\theta$ is a special isomorphism; in this case we will call $\theta$ \emph{weak special isomorphism}. It is obvious that a special isomorphism between the morphisms in $C_1$ implies the weak special isomorphism. Note that if $f\approx f'$ is a weak special isomorphism, then from the coherence property it follows that $f'$ is a unique morphism weak special isomorphic to $f$ with the same domain and codomain objects as $f'$.

\begin{Exam} Let $X$ be a topological space and $x\in X$ be a point in $X$. Consider the category $\Pi_2(X,x),$ whose objects are paths $x\rightarrow x$, and whose morphisms are homotopy classes of paths between paths, where $f,g:x\rightarrow x$. This category is a categorical group, for the proof see
	\cite{baez-lauda-2-groups} and the paper of Hardie, Kamps and Kieboom \cite{HKK}.
\end{Exam}
One can see more examples in \cite{baez-lauda-2-groups}, and also we will give them in the following paper, where we will construct a categorical group for any cssc-crossed module defined below in Section 5.

We define (strict) morphisms between categorical groups, which satisfy conditions of (strict) morphism of monoidal categories. Note that from this definition follow: $T(-x)=-{T(x)}$ and $T(-f)=-{T(f)}$, for any object $x$ and arrow $f$ in a categorical group. Categorical groups form a category with (strict) morphisms between them.
For any categorical group $\mathsf{C}=(C_0, C_1, d_0, d_1, i, m)$ denote $\Ker d_0=\{f\in \mathsf{C}|d_0(f)\approx 0\}$  and $\Ker d_1=\{f\in \mathsf{C}|d_1(f)\approx 0\}$
\begin{Lem}\label{comm} Let $\mathsf{C}=(C_0, C_1, d_0, d_1, i, m)$ be a categorical group. For any $f\in \Ker d_1$ and $g\in \Ker d_0$ we have a weak special isomorphism $f+g\approx g+f.$
\end{Lem}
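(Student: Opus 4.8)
The plan is to run the Eckmann--Hilton interchange argument, adapted to the non-strict setting by inserting the coherence isomorphisms $\lambda$ and $\rho$. Write $f\colon a\to b$ and $g\colon c\to e$, and let $\beta\colon b\to 0$ and $\gamma\colon c\to 0$ be the isomorphisms witnessing $f\in\Ker d_1$ and $g\in\Ker d_0$. The first step is to replace $f,g$ by the morphisms $\overline{f}=\beta f\colon a\to 0$ and $\overline{g}=g\gamma^{-1}\colon 0\to e$, whose target and source are \emph{exactly} $0$. These are genuinely composable, yielding $\overline{g}\,\overline{f}\colon a\to e$, and one recovers $f=\beta^{-1}\overline{f}$, $g=\overline{g}\gamma$. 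This replacement is what lets us quote the interchange law, which otherwise fails to apply because $b$ and $c$ are only isomorphic to $0$, not equal to it.

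Second, I would show that both $\overline{f}+\overline{g}$ and $\overline{g}+\overline{f}$ are carried to the single composite $\overline{g}\,\overline{f}$ by special isomorphisms. Factoring $\overline{f}+\overline{g}=(1_0+\overline{g})(\overline{f}+1_0)$ via the interchange law and then pushing through the naturality squares of $\rho$ and $\lambda$ gives $\overline{f}+\overline{g}=\lambda_e^{-1}\,\overline{g}\,\lambda_0\rho_0^{-1}\,\overline{f}\,\rho_a$; here the unit axiom $\lambda_0=\rho_0$ from Definition~\ref{moncat} collapses $\lambda_0\rho_0^{-1}$ to the identity, leaving $\lambda_e(\overline{f}+\overline{g})\rho_a^{-1}=\overline{g}\,\overline{f}$. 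The symmetric factorization $\overline{g}+\overline{f}=(\overline{g}+1_0)(1_0+\overline{f})$ gives, in the same way, $\rho_e(\overline{g}+\overline{f})\lambda_a^{-1}=\overline{g}\,\overline{f}$. Comparing the two identifies $\overline{f}+\overline{g}$ and $\overline{g}+\overline{f}$ through the special isomorphisms $\lambda_a,\rho_a,\lambda_e,\rho_e$. This is the genuine content of the argument, and the step that makes it work is precisely the equality $\lambda_0=\rho_0$.

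Third, I would transport this back to $f,g$. The interchange law gives $f+g=(\beta^{-1}+1_e)(\overline{f}+\overline{g})(1_a+\gamma)$ and $g+f=(1_e+\beta^{-1})(\overline{g}+\overline{f})(\gamma+1_a)$, so substituting the relation of the previous step produces a factorization $g+f=\theta_1(f+g)\theta_0^{-1}$ in which $\theta_0\colon a+c\to c+a$ and $\theta_1\colon b+e\to e+b$ are explicit composites of $\beta^{\pm1},\gamma^{\pm1},\lambda^{\pm1},\rho^{\pm1}$ and identity arrows. Rewritten as $\theta_1(f+g)=(g+f)\theta_0$, with $\theta_0=d_0$-component and $\theta_1=d_1$-component, this is exactly a weak special isomorphism $f+g\approx g+f$ in the sense recalled before the statement.

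The main obstacle is the bookkeeping forced by non-strictness: since the units and associators are not identities, one cannot invoke the strict interchange computation verbatim, and every equality of the strict proof becomes an equality only after conjugating by suitable coherence isomorphisms. All the real work is concentrated in the second step, where $\overline{g}\,\overline{f}$ is extracted twice; the commutativity of the two resulting squares is then guaranteed by coherence, since all diagrams built from special isomorphisms commute. A secondary point that must be watched is that $\theta_0,\theta_1$ genuinely qualify as \emph{special}: this holds as soon as the witnessing isomorphisms $\beta,\gamma$ are chosen among the special isomorphisms, so that their monoidal sums with identity arrows remain special, which is what keeps the conclusion within the weak special class.
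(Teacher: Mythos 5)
Your proof is correct and follows essentially the same route as the paper's: both arguments rest on the interchange law to factor $f+g$ and $g+f$ as composites through (an object isomorphic to) $0$, identify both with the single composite of $f$ and $g$, and then invoke coherence to see that the comparing isomorphisms are special. Your version merely adds an explicit normalization of the endpoints to $0$ via the witnessing isomorphisms and spells out the role of $\lambda$, $\rho$ and $\lambda_0=\rho_0$, which the paper compresses into the phrase ``applying the coherence property.''
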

\begin{proof}Suppose $d_0(g)=0'$ and $d_1(f)=0'',$ where $0'\approx 0 \approx 0''.$ By interchange law we have $(1_{0''}+g)(f+1_{0'})=f+g$ and $(g+1_{0''})(1_{0'}+f)=g+f$.  Let $\gamma \colon 0''\approx 0'$ be a special isomorphism. Applying the coherence property of a categorical group, we easily obtain that the left sides of the noted both equalities are isomorphic to $g\gamma f$, and both are weak special isomorphisms. From this it follows that there is a weak special isomorphism $f+g\approx g+f.$ \qed
\end{proof}

The analogous statement is well known for group-groupoids, where instead of the isomorphisms we have equalities in the definitions of $\Ker d_0$ and $\Ker d_1$ and in the final result \cite{BS1}.

Below we recall the definition of crossed module  introduced by Whitehead in \cite{Wth2} defining homotopy system. A {\em crossed module} $(A,B,\mu)$ consists of a group homomorphism
$\mu\colon A\rightarrow B$ together with an action $(b,a)\mapsto
{b\cdot a}$ of $B$ on $A$ such that for $a,a_1\in A$ and $b\in B$
\begin{enumerate}[leftmargin=1.5cm]
	\item [CM1.] $\mu(b\cdot a)=b+\mu(a)-b$, and
	
	\item [CM2.]  $\mu(a)\cdot a_1=a+a_1-a$.
\end{enumerate}
For an extensive treatment of crossed modules, see \cite[Part I]{Br-Hi-Si}.

Here are some examples of crossed modules.
\begin{itemize}
	\item[$\bullet$] The inclusion of a normal subgroup $N\rightarrow G$
	is a crossed module with the action by conjugation of $G$ on $N$. In particular any group $G$ can be regarded as a crossed module $1_G\colon G\rightarrow G$.
	\item[$\bullet$] For any group $G$, modules over the group ring of $G$ are crossed
	modules with $\mu= 0.$
	\item[$\bullet$] For any group $G$ the object $\mu\colon G\rightarrow \Aut G$ is a crossed module, where $\mu(g)\cdot g'=\mu(g)(g')$ for any $g, g' \in G.$
\end{itemize}

A {\em  morphism} $(f,g)\colon (A,B,\mu)\rightarrow (A^{\prime },B^{\prime },\mu^{\prime })$ of crossed module
is a pair $f\colon A\rightarrow A^{\prime }$,
$g\colon B\rightarrow B^{\prime }$ of morphisms of groups such that
$g\mu=\mu^{\prime }f$ and $f$ is an operator morphism over $g$,
i.e.,  $f(b\cdot a)=$ $g(b)\cdot f(a)$ for $a\in A$, $b\in B$. So crossed modules and  morphisms of them, with the obvious composition of morphisms $(f',g')(f,g)=(f'f,g'g)$  form a category.

\begin{Def}\label{Defgroup-groupoid}
	A {\em group-groupoid} $G$ is a \emph{group object} in the category of groupoids, which means that it is a groupoid $G$ equipped with functors
	\begin{enumerate}[label=(\roman{*}), leftmargin=1cm]
		\item  $+\colon G\times G\rightarrow G$, $(a,b)\mapsto a+b$;
		\item  $u\colon G\rightarrow G$, $a\mapsto -a$;
		\item   $0\colon \{\star\}\rightarrow G$, where $\{\star\}$ is a singleton, \qed
	\end{enumerate}
	which are called respectively sum, inverse and zero, satisfying the usual axioms for a group.
\end{Def}
The  definition we gave was introduced by Brown and Spencer in  \cite{BS1} under the name {\em $\mathcal{G}$-groupoid}, where the group operation is multiplication. The term group-groupoid was used later in \cite{BM1}. It is interesting that the group object in the category of small categories called {\em $\mathcal{G}$-category} is a group-groupoid. As it is noted by the authors this fact was known to Duskin.

\begin{Exam}
	If $X$ is a topological group, then the fundamental groupoid $\pi_1X$ of the space $X$ is a group-groupoid \cite{BS1}.
\end{Exam}
\begin{Exam}
	For a group $X$, the direct product $G=X\times X$ is a group-groupoid.  Here the domain and codomain homomorphisms are the projections; the object inclusion homomorphism is defined by the diagonal homomorphism $i(x)=(x,x),$ for any $x\in X$ and the composition of arrows is defined by $(x,y)\circ(z,t)=(z,y)$ whenever
	$x=t$, for any $x, y, z, t\in X.$
\end{Exam}

\begin{Theo}	\label{Theocatequivalence}\cite{BS1}
	The categories of crossed modules and of group-groupoids are equivalent.
\end{Theo}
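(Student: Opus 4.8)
The plan is to establish the equivalence by constructing a pair of functors in opposite directions and exhibiting natural isomorphisms showing that their composites are naturally isomorphic to the identity functors; I would follow the route of Brown and Spencer, adapting everything to the additive notation used above. First I would define a functor from group-groupoids to crossed modules. Given a group-groupoid $G=(G_0,G_1,d_0,d_1,i,m)$, I would set $B=G_0$ (a group via $i$ and the group structure of Definition~\ref{Defgroup-groupoid}), $A=\Ker d_0=\{a\in G_1\mid d_0(a)=0\}$, and $\mu=d_1|_A\colon A\to B$. The action of $B$ on $A$ is conjugation, $b\cdot a=i(b)+a-i(b)$, which lands in $A$ because $d_0(i(b)+a-i(b))=b+0-b=0$. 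Axiom CM1 is then immediate from $d_1 i=\mathrm{id}$, since $\mu(b\cdot a)=d_1(i(b)+a-i(b))=b+\mu(a)-b$. A strict morphism of group-groupoids restricts to a morphism of the associated crossed modules, giving functoriality.

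The verification of CM2 is where the real content sits, and it rests on the strict analogue of Lemma~\ref{comm}: in a group-groupoid, elements $c\in\Ker d_1$ and $a_1\in\Ker d_0$ satisfy $c+a_1=a_1+c$. Given $a,a_1\in A=\Ker d_0$, I would put $c=a-i\mu(a)$; then $d_1(c)=\mu(a)-\mu(a)=0$, so $c\in\Ker d_1$, while $\mu(a)\cdot a_1=i\mu(a)+a_1-i\mu(a)\in\Ker d_0$. Writing $a=c+i\mu(a)$ and conjugating gives $a+a_1-a=c+(\mu(a)\cdot a_1)-c$, and since $c$ commutes with $\mu(a)\cdot a_1$ the conjugation by $c$ is trivial, yielding $a+a_1-a=\mu(a)\cdot a_1$, which is precisely CM2.

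Conversely, I would define a functor from crossed modules to group-groupoids via the semidirect product. Given $(A,B,\mu)$, set $G_0=B$ and $G_1=A\rtimes B$ (multiplication determined by the action), with source $d_0(a,b)=b$, target $d_1(a,b)=\mu(a)+b$, object inclusion $i(b)=(0,b)$, and composition $(a_1,b_1)\circ(a_2,b_2)=(a_1+a_2,b_2)$ whenever $b_1=\mu(a_2)+b_2$. Checking that this is a group-groupoid — associativity and unitality of $\circ$, existence of inverses, and crucially that group multiplication and groupoid composition satisfy the interchange law — is a direct computation that uses CM1 and CM2.

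Finally I would produce the natural isomorphisms. Starting from a crossed module, the round trip returns $(\Ker d_0,\,G_0,\,d_1|)$ for $G_1=A\rtimes B$, and $a\mapsto(a,0)$, $b\mapsto b$ give an isomorphism of crossed modules. Starting from a group-groupoid $G$, the key map is $G_1\to\Ker d_0\rtimes G_0$, $f\mapsto(f-i(d_0(f)),\,d_0(f))$, with inverse $(a,b)\mapsto a+i(b)$; one checks it commutes with all structure maps and is natural in $G$. The main obstacle throughout is not a single deep step but the accumulated bookkeeping: establishing CM2 through the commutativity lemma, verifying the interchange law for the semidirect product, and confirming that the displayed isomorphisms are genuinely natural and respect composition of morphisms on both sides.
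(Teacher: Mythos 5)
Your proposal is correct and follows essentially the same route as the source the paper cites for this theorem (Brown--Spencer \cite{BS1}): the paper itself gives no proof, but your two functors ($\Ker d_0$ with the conjugation action in one direction, the semidirect product groupoid in the other) and your use of the commutativity of $\Ker d_0$ with $\Ker d_1$ to get CM2 are exactly the standard construction --- indeed the paper explicitly flags that strict analogue of Lemma~\ref{comm} as the known group-groupoid fact from \cite{BS1}. Nothing further is needed.
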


According to the authors this result is due to Verdier, which was used by Duskin and which was discovered independently by them. It was proved by Porter that the analogous statement is true in more general setting of a category of groups with operations \cite{Por}.

\section{Groups up to congruence relation}

Let $X$ be a non-empty set with an equivalence relation  $R$ on $X$.  Denote such a pair by $X_R$. Define a category whose objects are the pairs $X_R$ and morphisms are functions $f\colon X_R\rightarrow Y_S,$ such that $f(x)\sim_S f(y),$ whenever $x\sim_R y$. Denote this category by $\tilde{\Sets}$.

Note that for  $X_R,Y_S\in Ob(\tilde{\Sets})$, the product  $X_R\times Y_S$ is a product object in $\Sets$ with the equivalence  relation $R\times S$
defined by  \[(x,y)\sim_{R\times S}(x_1,y_1) \Leftrightarrow x\sim_R x_1 \q \t{and} \q y\sim_Sy_1 \]

We now define \emph{group up to congruence relation} or briefly {\em c-group} concept as follows.
\begin{Def}
	Let $G_R$ be an object in $\tilde{\Sets}$ and
	\[ \begin{array}{cccl}
	m \colon &G\times G& \longrightarrow & G\\
	&(a,b)    & \longmapsto     & a+b
	\end{array}\]
	a morphism in $\tilde{\Sets}$, i.e,  $m\in \tilde{\Sets}((G\times G)_{R\times R},G_R)$. $G_R$ is called a {\em c-group} if the following axioms are satisfied.
	\begin{enumerate}[label=(\roman{*}), leftmargin=1cm]
		\item $a+(b+c)\sim_R(a+b)+c$ for all $a,b,c\in G$;
		\item  there exists an element $0\in G$ such that $a+0\sim_R a\sim_R 0+a,$ for all $a\in G;$
		\item for each $a\in G$ there exists an element $-a$ such that  $a-a\sim_R 0$ and $-a+a\sim_R0$.
	\end{enumerate}
\end{Def}

In a c-group $G$,  $0\in G$ is called  {\em zero element} and for any $a\in G$ the element $-a\in G$ is called {\em inverse} of $a$. The congruence relations involved in group like identities and their compositions and sums will be called \emph{special congruence relations}.

\begin{Rem}Let $G_R$ be a c-group.  Then we have the following:
	\begin{enumerate}[label=(\roman{*}), leftmargin=1cm]
		\item if $a\sim_R b$ and $c\sim_R d$ for $a, b, c, d\in G$ then $a+c\sim_R b+d;$
		\item if $0$ and $0'$ are  different zero elements in  $G_R$, then  $0\sim_R 0'$;
		\item if $-a$ and $a'$ are different inverses of $a\in G$, then $a'\sim_R -a$;
		\item if $a\sim_Rb$ then  $-a\sim_R -b$.
	\end{enumerate}
\end{Rem}

\begin{Exam}
	Every group $G$ is a c-group where the  equivalence relation  is equality.
\end{Exam}

So the concept of c-group generalizes group notion.

\begin{Exam}
	This example comes from the Mac Lane's paper \cite{Mac Lane}, where the author  regards the quotient group as a group with congruence relation. Let $G$ be a group and $H$ a normal subgroup in $G.$ The quotient group $G/H$ can be regarded as a group with the same elements as the group $G$ and with the congruence relation - $g\sim g'$ if and only if $g-g'\in H.$ The operations are defined in the same way as in $G$ and they preserve the congruence relation. Such a group is a c-group, where group identities are satisfied up to equality.
\end{Exam}

\begin{Exam}
	Let $X$ be a topological space and $x\in X$. The set $P(X,x)$  of all  closed paths at $x$ is a c-group with the composition of paths. Here the congruence relation is the homotopy of the paths.
\end{Exam}

\begin{Exam}
	Let $\mathbb{Z}^{\ast}=\mathbb{Z}\backslash\{0\}$. Define an equivalence relation on  $\mathbb{Z}^{\ast}$ by $x\sim_Ry  \Leftrightarrow  xy>0$.  Then $\mathbb{Z}^{\ast}$ becomes a c-group with respect to the multiplication. The unit is the number $1$ and the inverse for any number is itself this number.
\end{Exam}

\begin{Exam}
	In a categorical group $\mathsf{C}$ the set $C_1$ of morphisms and the set $C_0$ of objects are both c-groups. The congruence relations are isomorphisms between arrows and between objects respectively.
\end{Exam}

\begin{Exam}\label{gr}  Any group can be endowed with a c-group structure. To show this recall that every group $G$ can be regarded as a part of a certain crossed module in the category of groups, for example $G \rightarrow \Aut G.$ According to Theorem \ref{Theocatequivalence} there exists  a group-groupoid $C=(C_0, C_1, d_o, d_1, i, m),$ for which $d_1|_{\Ker d_0}\colon \Ker d_0 \rightarrow C_0,$ is a crossed module and is isomorphic to $G \rightarrow \Aut G.$ $\Ker d_0$ is a c-group, the congruence relation on it is induced from the congruence relation in $C_1,$ which is the relation being isomorphic between the morphisms. From this follows that $G$ has also a c-group structure, group identities are satisfied up to equality, and naturally all special isomorphisms are equalities.
\end{Exam}

\begin{Def}
	Let $G_R$ be a c-group. If  $a+b\sim_R b+a$ for all $a,b\in G$, then  $G_R$ is called {\em c-abelian (or c-commutative)} c-group.
\end{Def}

\begin{Def}
	Let $G_R$ and  $H_S$  be  c-groups. A morphism   $f\in \tilde{\Sets}(G_R,H_S)$ such that  $f(a+b)=f(a)+f(b)$  for any $a,b\in G$ is called a  {\em c-group morphism} from  $G_R$ to $H_S$.
\end{Def}

From the definition it follows that a morphism between c-groups preserves the congruence relation; moreover we obtain that $f(0)\sim 0$ and $f(-a)=-f(a)$, for any $a\in G,$ where the second equality means that $f(-a)$ is one of the inverse element of $f(a).$ As a result we obtain that a morphism between c-groups carries special congruence relations to special congruence relations

\begin{Rem}
	If $f\colon G_R\rightarrow H_S$ and  $g\colon H_S\rightarrow N_T$ are  two c-group morphisms, then $gf\colon G_R\rightarrow N_T$ is also a c-group morphism. Further for each c-group  $G_R$ there is a unit morphism $1_G\colon G_R\rightarrow G_R$ such that $1_G$ is the identity function on $G_R$. Therefore we have a category of c-groups with c-group morphisms; denote this category by $\cGr$.
\end{Rem}

Let $G_R$ and  $H_S$ be  c-groups, and  $f\colon G_R\rightarrow H_S$ a morphism of c-groups.

\begin{Def}
	The subset $\cKer f=\{a\in G_R~|~ f(a)\sim_S 0_H \}$ is said to be {\em c-kernel} of the c-group morphism $f$.
\end{Def}

Note that $\cKer d_0$ is a c-group with the congruence relation induced from the isomorphisms in $C_1$.

\begin{Def}
	The subset  $\cIm f=\{b\in H_S~|~  \exists a\in G_R,  f(a)\sim_S b\}$ is said to be the {\em c-image} of the morphism $f$.
\end{Def}

\begin{Lem}\label{grp}
	Let $G $ be a  c-group with congruence relation $R$.  Then the quotient set $G/R$ becomes a group with the operation  defined by the induced map
	\[ \begin{array}{cccl}
	m^{\ast} \colon & G/R\times G/R & \longrightarrow & G/R           \\
	&   ([a],[b])   &   \longmapsto   & [a]+[b]=[a+b]
	\end{array}\]
	
\end{Lem}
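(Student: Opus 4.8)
The plan is to verify that the proposed multiplication turns $G/R$ into a group by pushing each of the three c-group axioms down from congruences in $G$ to genuine equalities between $R$-classes. Before any of the axioms can be checked, however, one must confirm that $m^{\ast}$ is actually well defined, i.e. independent of the chosen representatives; this is the only step that genuinely uses the hypothesis that $m$ is a morphism in $\tilde{\Sets}$, and I expect it to be the crux of the argument.

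First I would establish well-definedness. Suppose $[a]=[a']$ and $[b]=[b']$, that is $a\sim_R a'$ and $b\sim_R b'$. Since $m\in\tilde{\Sets}((G\times G)_{R\times R},G_R)$ preserves the relation $R\times R$, and $(a,b)\sim_{R\times R}(a',b')$, we obtain $a+b\sim_R a'+b'$, so $[a+b]=[a'+b']$. This is precisely the content of Remark (i). Hence $m^{\ast}([a],[b])=[a+b]$ does not depend on the chosen representatives and defines a binary operation on $G/R$.

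Next I would verify the group axioms, each of them obtained simply by replacing the congruence $\sim_R$ that holds in $G$ with equality of classes in $G/R$. For associativity, axiom (i) gives $a+(b+c)\sim_R(a+b)+c$, whence $[a]+([b]+[c])=([a]+[b])+[c]$. For the identity, axiom (ii) supplies an element $0\in G$ with $a+0\sim_R a\sim_R 0+a$, so $[0]$ is a two-sided unit, $[a]+[0]=[a]=[0]+[a]$. For inverses, axiom (iii) yields, for each $a\in G$, an element $-a$ with $a+(-a)\sim_R 0$ and $-a+a\sim_R 0$, so that $[-a]$ is a two-sided inverse of $[a]$. Thus all group axioms hold strictly in $G/R$.

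The only genuine obstacle is the well-definedness of $m^{\ast}$; once that is secured, everything afterwards is a mechanical transfer of the ``equalities up to $R$'' valid in the c-group into strict equalities of equivalence classes. It is worth noting that the same computation shows that the canonical surjection $G_R\rightarrow G/R$ carries the zero element and the inverses of the c-group to the unit and the inverses of the group $G/R$, so that passing to the quotient is compatible with the c-group structure.
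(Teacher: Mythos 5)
Your proof is correct and complete: well-definedness of $m^{\ast}$ follows exactly as you say from $m$ being a morphism in $\tilde{\Sets}$ (equivalently, Remark 3.4(i)), and the group axioms then descend from the c-group axioms by passing from $\sim_R$ to equality of classes. The paper states this lemma without proof, evidently regarding it as routine, so there is nothing to compare against; your write-up supplies precisely the argument that was omitted, and correctly identifies well-definedness as the only step using the hypothesis on $m$.
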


\begin{Def}
	Let $G_R$ be a c-group and $H$ be a subset of the underlying set of $ G$.  $H$ is called a {c-subgroup} in $G_R$ if $H_S$ is a c-group with the addition and congruence relation $S$ induced from $G_R$.
\end{Def}

Let $G_R$ be a c-group and $H$ be a subset of $G$. If for an element $a\in G$ there exists an element $b\in H$ such that $a\sim_R b$ then we write $a\inc H$.
If $H$ and $H'$ are two subsets of $G_R$, then we write $H\tilde{\subset}H'$ if for any $h\in H$ we have $h \inc H'$. If $H\tilde{\subset}H'$ and $H'\tilde{\subset}H,$ then we write $H\sim H'$.

\begin{Def}
	Let $G_R$ be a c-group and $H_S \subseteq G_R $  a c-subgroup in $G_R$. Then $H_S$ is called \emph{normal c-subgroup} if $g+h-g\inc H_S$ for any $h\in H_S$ and  $g\in G.$
\end{Def}

The condition given in the definition is equivalent to the condition $g+H_S-g\tilde{\subset} H_S,$ and it is equivalent itself to the condition $g+H_S\sim H_S+g$ for any $g\in G.$

\begin{Def}
	Let $G_R$ be a c-group and $H_S \subseteq G_R $ be a c-subgroup in $G_R$. Then $H_S$ is called \emph {perfect c-subgroup} if from $g\inc H$ it follows that $g\in H,$ for any $g\in G.$
\end{Def}

\begin{Def}
	A c-group $G_R$ is called \emph {connected} if $g\sim g'$ for any $ g, g'\in G.$
\end{Def}

\begin{Lem}\label{connected}
	Let $G_R$ and  $H_S$ be  c-groups and let  $f\colon G_R\rightarrow H_S$ be a morphism of c-groups. Then
	\begin{enumerate}[label=(\roman{*}), leftmargin=1cm]
		\item $\cKer f$ is perfect and normal c-subgroup in $G_R$;
		\item $\cIm f$ is perfect c-subgroup in $H_S$.
	\end{enumerate}
\end{Lem}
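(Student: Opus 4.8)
The plan is to verify, directly from the definitions, the three assertions packaged into the statement: that $\cKer f$ and $\cIm f$ are c-subgroups, that each is perfect, and that $\cKer f$ is normal. Throughout I will rely on the two derived properties of a c-group morphism recorded just after its definition, namely $f(0)\sim_S 0$ and $f(-a)=-f(a)$, together with the compatibility of the congruence with addition and with inversion from the Remark, and the transitivity of $\sim_S$.

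First I would show $\cKer f$ is a c-subgroup. Closure is immediate: for $a,b\in\cKer f$ we get $f(a+b)=f(a)+f(b)\sim_S 0+0\sim_S 0$, so $a+b\in\cKer f$. The element $0_G$ lies in $\cKer f$ because $f(0_G)\sim_S 0_H$, and it serves as zero since the identity axiom is inherited from $G_R$. For $a\in\cKer f$ one has $f(-a)=-f(a)\sim_S -0\sim_S 0$, so $-a\in\cKer f$; associativity and the inverse axiom are inherited, so $\cKer f$ is a c-group under the induced structure.

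Perfectness of $\cKer f$ is then the shortest step: if $g\inc\cKer f$, pick $a\in\cKer f$ with $g\sim_R a$; since $f$ preserves the congruence and $f(a)\sim_S 0$, transitivity yields $f(g)\sim_S f(a)\sim_S 0$, hence $g\in\cKer f$. For normality I would compute, for $h\in\cKer f$ and $g\in G$, that $f(g+h-g)=f(g)+f(h)-f(g)\sim_S f(g)+0-f(g)\sim_S f(g)-f(g)\sim_S 0$, using $f(h)\sim_S 0$ and the c-group axioms up to congruence; thus $g+h-g\in\cKer f$, which in particular gives $g+h-g\inc\cKer f$.

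The argument for $\cIm f$ runs in exact parallel, reading off existential witnesses rather than kernel membership: if $b_1,b_2\in\cIm f$ are witnessed by $a_1,a_2$, then $f(a_1+a_2)\sim_S b_1+b_2$ gives $b_1+b_2\in\cIm f$; the zero $0_H$ is witnessed by $0_G$, and $-b$ is witnessed by $-a$ whenever $b$ is witnessed by $a$, using $f(-a)=-f(a)$; and perfectness follows from transitivity exactly as for the kernel. I do not anticipate a genuine obstacle: the only point needing care is that every closure, zero, and inverse verification must be carried out up to congruence rather than as strict equalities, so the compatibility facts of the Remark and the two derived morphism properties must be invoked at each step. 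It is worth noting that no normality claim is made for $\cIm f$, consistent with the classical fact that the image of a group homomorphism need not be normal.
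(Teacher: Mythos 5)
Your proposal is correct and is simply a careful expansion of the paper's own (one-line) proof, which states that the lemma follows from the definitions; every step you take is the routine verification the authors leave to the reader, using exactly the derived morphism properties and the compatibility clauses of the Remark that are the intended tools. No divergence in approach and no gaps.
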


\begin{proof}
	Follow from the definitions.\qed
\end{proof}

Now we shall construct the quotient object $G/H$, where $H$ is a normal c-subgroup of a c-group $G$. Consider the classes  $\{g+H|g\in G\}.$ If $g+H \cap g'+H\neq \emptyset,$ then we obtain $-g+g'\inc H,$ which implies that $g+H\sim g'+H.$ Now consider classes of these classes $\{cl(g+H)|g\in G\},$ where $cl(g+H)=\cup\{x\in G|x\inc g+H\}.$ We define $G/H=\{cl(g+H)|g\in G\}.$ An addition operation in this set is defined by $cl(g+H)+cl(g'+H)=cl((g+g')+H),$ for any $g,g'\in G.$ It is easy to see that this operation is defined correctly, it is associative and we have the unit element $cl(0+H)$. Actually constructed object is a group, the congruence  relation in $G/H$ is the equality $``=".$ We have a usual surjective morphism $p: G \rightarrow G/H.$

\begin{Lem} (i) If $G$ is a c-group and $H$ is a normal c-subgroup in $G,$ then for any group $G'$ and c-group morphism $f:G\rightarrow G'$, if $f(h)=0$ for any $h\in H,$ there exists a unique morphism $\theta :G/H \rightarrow G',$ in $\cGr$  such that $\theta p=f$.
	(ii)  If $H$ is a perfect normal c-subgroup in $G,$ then $H=\cKer p.$
\end{Lem}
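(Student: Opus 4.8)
The plan is to verify that the obvious candidate map is well-defined, a c-group morphism, and unique. First I would define $\theta\colon G/H\rightarrow G'$ by $\theta(cl(g+H))=f(g)$ for any $g\in G$, and the main work is to check this is well-defined. So I must show that if $cl(g+H)=cl(g'+H)$, then $f(g)=f(g')$. By the construction of $G/H$ recalled just above the lemma, two classes coincide (equivalently $g+H\sim g'+H$) precisely when $-g+g'\inc H$, i.e. there is $h\in H$ with $-g+g'\sim_R h$. Since $f$ is a c-group morphism it preserves the congruence relation and satisfies $f(a+b)=f(a)+f(b)$ and $f(-a)=-f(a)$, so from $-g+g'\sim_R h$ I would deduce $-f(g)+f(g')\sim_S f(h)=0$ in $G'$. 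Because $G'$ is an ordinary group (its congruence relation is equality), $\sim_S$ is equality, hence $-f(g)+f(g')=0$ and thus $f(g)=f(g')$. This gives well-definedness.

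**Remaining checks for (i).** Having $\theta$ well-defined, I would check it is a group morphism: using the addition $cl(g+H)+cl(g'+H)=cl((g+g')+H)$ and $f(g+g')=f(g)+f(g')$, the computation $\theta(cl(g+H)+cl(g'+H))=f(g+g')=f(g)+f(g')=\theta(cl(g+H))+\theta(cl(g'+H))$ is immediate, and since both $G/H$ and $G'$ are genuine groups this is automatically a $\cGr$-morphism. The relation $\theta p=f$ holds by construction, since $p(g)=cl(g+H)$ and $\theta(cl(g+H))=f(g)$. Uniqueness follows because $p$ is surjective: any $\theta'$ with $\theta'p=f$ must satisfy $\theta'(cl(g+H))=\theta'(p(g))=f(g)=\theta(cl(g+H))$ for all $g$, forcing $\theta'=\theta$.

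**Part (ii): identifying the c-kernel when $H$ is perfect.**

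Here I must show $H=\cKer p$ as subsets of $G$, and I would prove the two inclusions separately. For $H\subseteq\cKer p$: if $h\in H$ then $p(h)=cl(h+H)$, and since $h\inc H$ (indeed $-0+h=h\sim_R h$ with $h\in H$, or more directly $h+H\sim 0+H$ because $-h+0\inc H$ using that $H$ is a c-subgroup containing an inverse of $h$), one gets $p(h)=cl(0+H)=0_{G/H}$, so $h\in\cKer p$. For the reverse inclusion $\cKer p\subseteq H$: take $g\in\cKer p$, so $p(g)\sim_{=}0_{G/H}$, i.e. $cl(g+H)=cl(0+H)$; by the class-equality criterion this means $-0+g=g\inc H$, that is, $g\inc H$. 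Now I invoke the hypothesis that $H$ is \emph{perfect}: by the Definition of perfect c-subgroup, $g\inc H$ implies $g\in H$. This is the step where perfectness is essential—without it one only recovers $g\inc H$, which is the c-kernel being merely $\sim$-equivalent to $H$ rather than equal.

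The only genuine subtlety I anticipate is bookkeeping around the congruence relation: one must consistently use that $G'$ in part (i) and $G/H$ throughout carry equality as their congruence (so $\sim$ collapses to $=$ exactly where needed), while $G$ and $H$ carry the nontrivial relation $R$ (resp. its restriction $S$). The hypotheses are tailored to make each collapse legitimate: in (i) the target being an honest group kills the ambiguity in $f(h)=0$, and in (ii) perfectness is precisely what upgrades $\inc$ to genuine membership. I expect these two uses of the hypotheses to be the conceptual crux, with the rest being the routine verifications sketched above.
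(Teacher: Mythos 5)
Your proof is correct and follows the only natural route; the paper itself dismisses this lemma with ``Easy checking,'' and your argument supplies exactly the verifications being left to the reader, with the two hypotheses ($G'$ an honest group, $H$ perfect) used precisely where they must be. The only point worth tightening is the well-definedness step: you should note explicitly that $cl(g+H)=cl(g'+H)$ yields $-g+g'\inc H$ because $g'\in cl(g'+H)$ (as $g'\sim g'+0_H$), so $g'\inc g+H$ and hence $-g+g'\sim h$ for some $h\in H$.
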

\begin{proof}
	Easy checking.\qed
\end{proof}

\section{Actions and crossed modules in $\cGr$}

An extension in the category $\cGr$ is defined in a similar way as in the category of groups.

\begin{Def}
	Let $A$, $B\in\cGr$. An \emph{extension} of $B$ by $A$ is a sequence
	\begin{equation} \label{extension}
	\xymatrix{0\ar[r]&A\ar[r]^-{i}&E\ar[r]^-{p}&B\ar[r]&0}
	\end{equation} in which $p$ is surjective  and  $i$ is the c-kernel of $p$ in $\cGr$. We say that an extension is \emph{split}  if there is a morphism $s\colon B\rightarrow E,$ such that $ps=1_B$.
\end{Def}

We shall identify $a\in A$ with its image $i(a)$. We  shall use the notation $b\cdot a=s(b)+(a-s(b))$. Then a split extension induces an action (from the left side) of $B$ on $A$. We have the following conditions for this action:
\begin{enumerate}[label=(\roman{*}), leftmargin=1cm]
	\item $b\cdot(a+a_1)\sim (b\cdot a)+(b\cdot a_1)$,
	\item $(b+b_1)\cdot a\sim b\cdot(b_1\cdot a)$,
	\item $0\cdot a\sim a$,
	\item If $a\sim a_1$ and $b\sim b_1$ then $b\cdot a \sim b_1\cdot a_1$,\end{enumerate}
for $a,a_1\in A$ and $b,b_1\in B$.

Here and in what follows we omit congruence relations symbols for $A$ and $B$.

Let $A, B \in \cGr$ and suppose $B$ acts on $A$. Consider the product $B \times A$ in  $\cGr$. We have the operation in $B \times A$, defined in analogous way as in the case of groups:

$(b',a')+(b,a)=(b'+b, a'+b'\cdot a)$
for any $b, b'\in B, a, a'\in A$.

This operation is associative up to the relation defined by $(b,a)\sim (b',a')$ if and only if $b\sim b'$ and $a\sim a'$, which is a congruence relation.
Obviously we have a zero element $(0,0)$ in $B \times A$ and the opposite element for any pair $(b,a)\in B \times A$ is $(-b, -b\cdot (- a)).$ Therefore we have a semidirect product $B \ltimes A$ in $\cGr$.

\begin{Def}\label{c-iso}
	Let $f:D\rightarrow D'$ be a morphism in $\cGr$. $f$ is called an isomorphism up to congruence relation or c-isomorphism if there is a morphism $f':D'\rightarrow D$, such that $ff'\sim 1_{D'}$ and $f'f\sim 1_D$.
\end{Def}

We will denote such isomorphism by $\tilde{\approx}.$

We have a natural projection $p': B \ltimes A \rightarrow B$. The c-kernel of $p'$ is not isomorphic to $A$ as it is in the case of groups, but we have an isomorphism up to congruence relation $\cKer p'\tilde{\approx}A$.

Let $0\rightarrow A\rightarrow E \rightarrow B\rightarrow 0$ be a split extension of $B$ by $A$ in $\cGr$. Then we have an action of $B$ on $A$ and the corresponding semidirect product $B \ltimes A$. In this case we obtain a c-isomorphism $E\tilde{\approx}B\ltimes A$ given by the correspondences analogous to the group case.

\begin{Def} Let $G$ and $H$ be two c-groups, $\partial\colon G\rightarrow H$ morphism of c-groups and $H$ acts on $G$. We call $(G,H,\partial)$ a \emph{c-crossed module} if the following conditions are satisfied:
	\begin{enumerate}[label=(\roman{*}), leftmargin=1cm]
		\item $\partial(b\cdot a)= b+(\partial(a)-b)$,
		\item $\partial(a)\cdot a_1\sim a+(a_1-a).$
	\end{enumerate}
	for $a,a_1\in G$ and $b\in H$.
\end{Def}

Let $(G,H,\partial)$ and $(G',H',\partial')$ be two c-crossed modules. A \emph{c-crossed module morphism} is a pair of morphisms $<f,g>\colon(G,H,\partial)\rightarrow(G',H',\partial')$ such that the diagram
\[\xymatrix{
	G \ar[r]^-{\partial} \ar[d]_-{f} &  H \ar[d]^-{g} \\
	G'\ar[r]^-{\partial'}  &  H'   }\]
is commutative, and for all $b\in H$ and $a\in G$, we have $f(b\cdot a)= g(b)\cdot f(a)$, where $f$ and $g$ are morphisms of c-groups.

c-crossed modules and morphisms of c-crossed modules form a category.

\begin{Exam} Any crossed module in the category of groups can be endowed with the structure of c-crossed module, the proof is analogous to the one in Example \ref{gr}.
\end{Exam}
For other examples see Section 5.

Let $G\in \cGr$ and $H$ be a normal c-subgroup in $G$. It is easy to see that in general we do not have a usual action by conjugation of $G$ on $H.$

\begin{Lem}
	If $H$ is a perfect normal c-subgroup of a c-group $G$, then we have an action of $G$ on $H$ in the category $\cGr$ and the inclusion morphism defines a c-crossed module.
\end{Lem}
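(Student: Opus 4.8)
The plan is to produce the action of $G$ on $H$ by using the perfectness of $H$ to convert the ambient congruence-level conjugation in $G$ into a genuine operation landing in $H$, and then to verify the two c-crossed module axioms together with the action axioms (i)--(iv). First I would define the action by the expected conjugation formula $g\cdot h = g+(h-g)$ for $g\in G$ and $h\in H$. Since $H$ is a normal c-subgroup we know by definition that $g+h-g\inc H$, i.e. there is an element of $H$ congruent to $g+h-g$; but because $H$ is \emph{perfect}, $g+h-g\inc H$ forces $g+h-g\in H$ itself. This is precisely the point of assuming perfectness: it upgrades the "congruent to an element of $H$" membership to honest membership, so that $g\cdot h$ is a well-defined element of $H$ and the operation $G\times H\to H$ makes sense as a map in $\cGr$.

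Next I would check that this formula satisfies the four action axioms listed after the definition of split extension. Axioms (i)--(iii) and the compatibility (iv) all follow from the c-group laws in $G$ (associativity, the zero and inverse axioms) applied up to the special congruence relation $\sim_R$, exactly as in the ordinary group case, with every equality replaced by a congruence. For instance $0\cdot h = 0+(h-0)\sim h$ uses the zero axiom; $(b+b_1)\cdot h \sim b\cdot(b_1\cdot h)$ uses associativity together with Remark parts (i),(iv) to propagate congruences through sums; and (iv) is immediate from the fact that $+$ and inversion respect $\sim_R$. I would note that, since $H$ is perfect, the congruences occurring here keep us inside $H$ after we pass to the representative, so the action is internally consistent on $H$.

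Then I would verify the two c-crossed module axioms for the inclusion $\iota\colon H\hookrightarrow G$ with this action. For CM1, $\partial(g\cdot h)=\iota(g+(h-g))=g+(h-g)=g+(\partial(h)-g)$, which holds on the nose (an equality, hence certainly the required relation), because $\iota$ is just the inclusion and $\partial(h)=h$. For CM2, I must show $\partial(h)\cdot h_1 \sim h+(h_1-h)$; but $\partial(h)\cdot h_1 = h+(h_1-h)$ by the very definition of the action applied with $g=h\in H\subseteq G$, so again the required congruence holds as an equality. I would also confirm that $\iota$ is a c-group morphism, which is immediate since the addition and congruence on $H$ are induced from $G$.

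The main obstacle, and the step deserving the most care, is the \emph{well-definedness and closure} of the action: one must be sure that $g\cdot h$ genuinely lands in $H$ and that the map $G\times H\to H$ is a morphism in $\tilde{\Sets}$ (i.e. respects the product congruence relation $R\times S$). This is exactly where perfectness is indispensable — without it we would only have $g+h-g\inc H$, giving a value defined merely up to congruence and not a bona fide element of $H$, so the "operation" would fail to be a set-map into $H$. I would therefore foreground the implication $g+h-g\inc H \Rightarrow g+h-g\in H$ and check continuity of the formula with respect to $\sim$ on both factors using Remark (i) and (iv); once this is secured, the remaining verifications are the routine congruence-level analogues of the classical normal-subgroup-by-conjugation crossed module.
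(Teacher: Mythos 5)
Your proof is correct and follows the route the paper intends (the paper's own proof is just ``Easy checking''): you define the conjugation action, correctly identify that perfectness is exactly what upgrades the normality condition $g+h-g\inc H$ to genuine membership $g+h-g\in H$ so the action is well defined, and then the action axioms and both c-crossed module conditions (the second even holding as an equality) follow routinely. No gaps.
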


\begin{proof}
	Easy checking.\qed
\end{proof}

For a categorical group $\mathsf{C}=(C_0, C_1, d_0, d_1, i, m)$, we have a split extension
\begin{equation} \label{extension}
\xymatrix{0\ar[r]&\cKer d_0\ar[r]^-{j}&C_1\ar[r]^-{d_0}&C_0\ar[r]&0}
\end{equation}
where $\cKer d_0$ is a c-group, a congruence relation is defined naturally as an isomorphism between the arrows in $\cKer d_0$. Now we define an action of $C_0$ on $\cKer d_0$ by
\[ \begin{array}{ccl}
C_0\times \cKer d_0& \rightarrow & \cKer d_0\\
(r,c)              & \mapsto     & r\cdot c=i(r)+(j(c)-i(r)).
\end{array}\]
\begin {Prop}
The action of $C_0$ on $\cKer d_0$ satisfies action conditions in $\cGr$.
\end {Prop}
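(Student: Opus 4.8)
The plan is to verify the four action conditions listed earlier in Section 4 (labelled (i)--(iv)) directly for the specific action $r\cdot c=i(r)+(j(c)-i(r))$, using the c-group structure of $C_1$ together with the coherence property of the categorical group $\mathsf{C}$. The crucial point is that $\cKer d_0$ and $C_0$ are c-groups whose congruence relations are ``being isomorphic'' between arrows (respectively objects), and that a morphism of c-groups carries special congruence relations to special congruence relations. So everywhere I aim only to produce weak special isomorphisms, which is exactly what the relation $\sim$ means here; no strict equalities are needed.

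First I would check condition (iii), $0\cdot c\sim c$: here $0\cdot c=i(0)+(j(c)-i(0))$, and since $i(0)=1_0$ up to special isomorphism, the identities $1_0+f\approx f+1_0\approx f$ (noted right after Definition \ref{moncat}) give $0\cdot c\approx c$ directly. Next, for condition (i), $r\cdot(c+c_1)\sim (r\cdot c)+(r\cdot c_1)$, I would expand both sides: the right side is $\bigl(i(r)+(j(c)-i(r))\bigr)+\bigl(i(r)+(j(c_1)-i(r))\bigr)$, and after inserting a cancelling pair $-i(r)+i(r)\approx 0$ in the middle and reassociating (all legitimate up to special isomorphism by coherence), this collapses to $i(r)+\bigl(j(c)+j(c_1)\bigr)-i(r)=i(r)+\bigl(j(c+c_1)-i(r)\bigr)$, which is the left side; here I use that $j$ is a c-group morphism so $j(c+c_1)=j(c)+j(c_1)$. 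Condition (ii), $(r+r_1)\cdot c\sim r\cdot(r_1\cdot c)$, is the analogous ``conjugation is multiplicative'' computation: expanding $r\cdot(r_1\cdot c)=i(r)+\bigl(i(r_1)+(j(c)-i(r_1))-i(r)\bigr)$ and regrouping gives $\bigl(i(r)+i(r_1)\bigr)+\bigl(j(c)-(i(r)+i(r_1))\bigr)$, and since $i$ is a c-group morphism $i(r)+i(r_1)\approx i(r+r_1)$, yielding $(r+r_1)\cdot c$. Finally condition (iv), compatibility with $\sim$, follows since $i$, $j$, and the operation $+$ all respect the congruence relation (the first Remark in Section 3), so $r\sim r_1$ and $c\sim c_1$ propagate through the formula.

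The main obstacle I anticipate is purely bookkeeping rather than conceptual: every reassociation, every insertion of a cancelling inverse pair $-i(r)+i(r)\approx 0$, and every use of $\lambda,\rho$ to absorb a zero must be justified as a \emph{special} (or weak special) isomorphism so that the resulting composite lands in the congruence relation $\sim$ and not merely in ``isomorphic by some arbitrary arrow.'' This is precisely where the coherence of the categorical group does the work: Laplaza-style coherence guarantees that all diagrams built from $\alpha,\lambda,\rho,\epsilon,\delta$ commute, so any two special isomorphisms with the same domain and codomain agree, and the chain of manipulations assembles into a single well-defined special isomorphism. I would therefore state once at the outset that all the rebracketings and unit/counit cancellations below are instances of special isomorphisms, invoke coherence to suppress their explicit form, and then present each of (i)--(iv) as a short chain of $\approx$'s. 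The one subtlety to watch is that the inverse $-i(r)$ appearing in the action formula is an inverse \emph{in the c-group} $C_1$, i.e.\ $i(r)+(-i(r))\approx 0$ via $\delta$ and $-i(r)+i(r)\approx 0$ via $\epsilon$; I must use the correct counit on each side so the cancellations are genuinely special, and this is what makes conditions (i) and (ii) go through cleanly.
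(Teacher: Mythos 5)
Your proposal is correct and follows essentially the same route as the paper: expand the conjugation formula, insert a cancelling pair $-i(r)+i(r)\approx 0$ and reassociate for condition (i), use $i(r+r_1)=i(r)+i(r_1)$ for condition (ii), and observe that $i$, $j$ and $+$ respect the congruence relation for the remaining conditions, with coherence guaranteeing all rebracketings are special isomorphisms. The paper's proof is just a terser version of the same chain of $\sim$'s.
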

\begin{proof}First we shall show the congruence relation $r\cdot(c+c')\sim r\cdot c+r\cdot c'.$
	\begin{alignat*}{2}
	r\cdot(c+c') &= i(r)+((c+c')-i(r)) \\
	& \sim (i(r)+((c-i(r))+i(r))+(c'-i(r)))  \\
	& \sim (i(r)+(c-i(r)))+(i(r)+(c'-i(r))) \\
	& = r\cdot c+r\cdot c'.
	\end{alignat*}
	
	Next we shall show that $(r+r')\cdot c\sim r\cdot (r'\cdot c)$. We shall omit some brackets since we deal with congruence relation.
	
	We have
	\begin{alignat*}{2}
	(r+r')\cdot c &= i(r+r')+(c-i(r+r')) \\
	& \sim i(r)+(i(r')+c-i(r'))-i(r) \\
	& =r\cdot (r'\cdot c).
	\end{alignat*}
	
	It is trivial that $0\cdot c\sim c$ and  $r\cdot 0\sim 0$. Now we shall show that if $r\sim r'$ and $c\sim c'$ then $r\cdot c\sim r'\cdot c'$. We have
	\begin{alignat*}{2}
	r\cdot c &= i(r)+(c-i(r))\\ 
	&\sim i(r')+(c'-i(r'))\\ 
	&=r'\cdot c'.
	\end{alignat*} \qed
\end{proof}

\section{cssc-crossed modules and the main theorem}

\begin{Def}\label{connected cr}
	A c-crossed module $(G,H,\partial)$ will be called connected if $G$ is a connected c-group.
\end{Def}

Denote $d=d_1{|\cKer d_0}.$

\begin{Prop}
	For a categorical group $\mathsf{C}=(C_0, C_1, d_0, d_1, i, m)$, $(\cKer d_0,C_0,d)$ is a connected c-crossed module.
\end{Prop}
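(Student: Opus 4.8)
The plan is to verify four things in turn: that $d\colon \cKer d_0\to C_0$ is a morphism of c-groups, that the two c-crossed module axioms hold, and that $\cKer d_0$ is a connected c-group; the four action axioms are already supplied by the preceding Proposition, and $\cKer d_0$, $C_0$ are c-groups by the earlier Example and remarks. Throughout I would use that the congruence on $C_1$ (hence on $\cKer d_0$) is \emph{being isomorphic as arrows}: $f\sim g$ means there are isomorphisms $\theta_0\colon d_0 f\to d_0 g$ and $\theta_1\colon d_1 f\to d_1 g$ in the groupoid with $\theta_1 f=g\theta_0$, while on $C_0$ it is being isomorphic as objects. That $d$ is a c-group morphism is then immediate: since $+$ is a bifunctor, $d(c+c')=d_1(c+c')=d_1(c)+d_1(c')=d(c)+d(c')$, and if $c\sim c'$ via $(\theta_0,\theta_1)$ then $\theta_1\colon d(c)\to d(c')$ exhibits $d(c)\sim d(c')$ in $C_0$.

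For axiom (i) I would compute directly, using that $d_1$ is additive, that $d_1 i=\mathrm{id}_{C_0}$ (so $d_1(i(r))=r$), that $d_1(j(c))=d_1(c)=d(c)$, and that $-i(r)=-1_r=1_{-r}$ gives $d_1(-i(r))=-r$. Then
\[
d(r\cdot c)=d_1\big(i(r)+(j(c)-i(r))\big)=d_1(i(r))+\big(d_1(j(c))-d_1(i(r))\big)=r+(d(c)-r),
\]
which is the required \emph{equality}; note that no associativity-up-to-congruence is invoked, because $d_1$ splits the monoidal sum literally and the bracketing is preserved.

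The heart of the matter is axiom (ii) together with connectedness, and both rest on recognising when two arrows are congruent. The key observation is that every $c\in\cKer d_0$ is congruent to $i(d(c))=1_{d_1(c)}$: taking $\theta_0=c\colon d_0(c)\to d_1(c)$ and $\theta_1=1_{d_1(c)}$ one has $\theta_1 c=c=1_{d_1(c)}\theta_0$, so $c\sim i(d(c))$. Granting this, compatibility of $\sim$ with the sum and with inverses (the Remark following the definition of c-group) yields $c+(c_1-c)\sim i(d(c))+(c_1-i(d(c)))=d(c)\cdot c_1$, which is exactly axiom (ii). For connectedness I would take arbitrary $c,c'\in\cKer d_0$; since $d_0(c)\approx 0\approx d_0(c')$, choosing an isomorphism $\theta_0\colon d_0(c)\to d_0(c')$ through $0$ and setting $\theta_1=c'\theta_0 c^{-1}$ gives $\theta_1 c=c'\theta_0$, hence $c\sim c'$, so $\cKer d_0$ is connected. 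In fact connectedness already forces axiom (ii), since both $d(c)\cdot c_1$ and $c+(c_1-c)$ lie in $\cKer d_0$, so one may alternatively deduce (ii) from it.

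The main obstacle is conceptual rather than computational: one must pin down precisely which groupoid isomorphisms realise the congruence relation and check that the witnessing squares $\theta_1 f=g\theta_0$ genuinely commute. Once the coarseness of ``isomorphic as arrows'' is made explicit—so that $f\sim g$ holds as soon as the sources $d_0 f$ and $d_0 g$ are isomorphic—the congruences $c\sim i(d(c))$ and the connectedness of $\cKer d_0$ reduce to exhibiting explicit isomorphisms, and axioms (i) and (ii) follow from only the functoriality of $d_1$ and the compatibility of $\sim$ with $+$ and $-$.
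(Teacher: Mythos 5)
Your proposal is correct and follows essentially the same route as the paper: connectedness of $\cKer d_0$ from the fact that all domains are isomorphic to $0$ in a groupoid, axiom (i) by direct computation using additivity of $d_1$, and axiom (ii) as a consequence of compatibility of $\sim$ with $+$ and $-$ (the paper deduces (ii) directly from connectedness, which you also note as an alternative). Your explicit congruence $c\sim i(d(c))$ via the square $1_{d_1(c)}\cdot c=c\cdot 1$ is a slightly more informative witness than the paper gives, but it is the same argument in substance.
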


\begin{proof} $\cKer d_0$ is a connected c-group, which follows from the fact  that any two arrows in $\cKer d_0$ have the domains isomorphic to $0$ and that $\mathsf{C}$ is a groupoid.  Note that the congruence relation in $C_0$  is generated by the isomorphisms between the objects in $C_0.$ Therefore $d$ preserves the congruence relation in  $\cKer d_0$ since $f\approx f'$ implies that $d_1 f\approx d_1 f'.$ For the first condition of crossed module we have $d(r\cdot c)=d(i(r)+(c-i(r)))=r+(dc-r),$ for any $c\in\cKer d_0$ and $r\in C_0.$
	
	For the second condition of crossed module we have to prove that $(dc)\cdot c'\sim c+(c'-c)$ for $c, c'\in \cKer d_0,$ which follows from the fact that $\cKer d_0$ is a connected c-group. \qed
\end{proof}

Now we shall introduce another sort of object denoted as $c\textbf{K}er d_0$ for any categorical group $\mathsf{C}=(C_0, C_1, d_0, d_1, i, m)$. By definition $c\textbf{K}er d_0 =\{f\in C_1| d_0(f)=0\}$. An addition operation is defined by $f+f'=(f+f')\gamma,$ where $f+f':0+0\rightarrow d_1(f)+d_1(f')$ is a sum in $C_1$, i.e. the same as the sum in $\cKer d_0$ and $\gamma : 0 \rightarrow 0+0$ is a unique special isomorphism in $C_1$).  $\sim$ -relation in $c\textbf{K}er d_0$ is induced from the relation in $C_1$, which is a relation of being isomorphic in $C_1$ and it is a congruence relation in $c\textbf{K}er d_0$. It is obvious that $d\mid_{c\textbf{K}er d_0}$ preserves the congruence relation. The operation in $c\textbf{K}er d_0$ is associative up to congruence relation. Zero element in $c\textbf{K}er d_0$ is a zero arrow $0$; we have $f+0\sim f,$ $0+f\sim f,$ for any $f\in c\textbf{K}er d_0.$ The opposite morphism of $f$ in $C_1$ is $-f: -0\rightarrow -d_1f.$ There is a unique special isomorphism $\kappa: 0\approx -0$. Define the opposite morphism $-f$ in $c\textbf{K}er d_0$ as $-f\kappa.$  One can easily see that $f+(-f)\approx 0$ in $c\textbf{K}er d_0$ and   the $\sim$ -relation is a congruence relation. Therefore $c\textbf{K}er d_0$ is a c-group. $C_0$ is also a c-group, where a congruence relation is generated by isomorphisms between the objects. Now we will define an action of $C_0$ on $c\textbf{K}er d_0.$ By definition $r\cdot c=(i(r)+(c-i(r)))\gamma$ for any $r\in C_0, c\in c\textbf{K}er d_0,$ where $\gamma$ is a special isomorphism $0\approx r+(0-r),$ which is unique as we know already. Here we check  action identities. We have 
\begin{alignat*}{2}
r\cdot (c+c')&=(i(r)+((c+c')-i(r))\gamma \\
& \sim(i(r)+(c-i(r)))\gamma_1+ i(r)+(c'-i(r)))\gamma_2 \\
& =r\cdot c+ r\cdot c'
\end{alignat*}
for any $r\in C_0, c,c' \in c\textbf{K}er d_0$. Other three conditions of action for c-groups are checked in analogous way.

\begin{Def}\label{strict cr}
	A c-crossed module $(G,H,\partial)$ will be called strict if it satisfies c-crossed module conditions, where instead of $\sim$-relation in the second condition is equality, i.e.
	\begin{enumerate}[label=(\roman{*}), leftmargin=1cm]
		\item $\partial(b\cdot a)= b+(\partial(a)-b)$,
		\item $\partial(a)\cdot a_1= a+(a_1-a)$,
	\end{enumerate}
	for $a,a_1\in G$ and $b\in H$.
\end{Def}

\begin{Def}\label{weak special} In a c-crossed module $(G,H,\partial)$ a congruence relation $g\sim g'$ in $G$ will be called weak special relation if $\partial (g)\sim\partial(g')$ is a special congruence relation in $H$.
\end{Def}

Since in a c-crossed module $(G,H,\partial)$ the morphism $\partial$ carries any special congruence relation to the special congruence relation, in a crossed module every special congruence relation in $G$ is a weak special congruence relation.
Let $(G,H,\partial)$ be a c-crossed module.

\begin{Cond}\label{special cr}
	For any congruence relation $\gamma :\partial c\sim r$,  there exists $c'\sim c$, such that $\partial c' =r$, where $c, c'\in G$ and $r\in H$. If $\gamma $ is a special congruence relation, then  $c'$ is a unique element in $G$  which is weak equivalent to $c$.
\end{Cond}

\begin{Def}\label{special}
	A c-crossed module will be called \emph{special} if it satisfies Condition \ref {special cr}.
\end{Def}

If a c-crossed module is connected, strict and special we will write shortly that it is a cssc-crossed module. This kind of crossed modules are exactly those we were looking for the description of categorical groups up to equivalence of the corresponding categories, which will be proved in the next paper.

\begin{Theo}
	For a categorical group $\mathsf{C}=(C_0, C_1, d_0, d_1, i, m)$ the triple $(c\textbf{K}er d_0, C_0, d)$ is a cssc-crossed module.
\end{Theo}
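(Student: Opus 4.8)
The plan is to verify the four requirements separately: that $(c\textbf{K}er d_0, C_0, d)$ is a c-crossed module, and that it is additionally connected, strict, and special. The c-group structure on $c\textbf{K}er d_0$, the fact that $d=d_1|_{c\textbf{K}er d_0}$ is a morphism of c-groups (since $d_1$ sends monoidal sums to sums and isomorphic arrows to isomorphic arrows), and the action of $C_0$ on $c\textbf{K}er d_0$ together with its axioms are all available from the discussion preceding the theorem, so I would take these as given. First I would check crossed-module condition (i): since $i(r)=1_r$ and precomposition with the special isomorphism $\gamma$ leaves the codomain unchanged, and $d_1$ turns a monoidal sum into a sum, a direct computation gives $d(r\cdot c)=d_1(i(r)+(c-i(r)))=r+(d(c)-r)$ as an \emph{equality} in $C_0$. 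For connectedness, any two elements $c,c'\in c\textbf{K}er d_0$ are arrows $0\to d_1c$ and $0\to d_1c'$; since $\mathsf{C}$ is a groupoid, the pair $(1_0,\,c'c^{-1})$ is an isomorphism $c\approx c'$ in $C_1$, so $c\sim c'$ and $c\textbf{K}er d_0$ is a connected c-group. The same remark already yields crossed-module condition (ii) up to $\sim$.

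The main obstacle is strictness, i.e. upgrading (ii) to the honest equality $d(c)\cdot c_1 = c+(c_1-c)$; this is exactly the point for which the strict kernel $c\textbf{K}er d_0$ (with $d_0(f)=0$ on the nose) and the $\gamma,\kappa$-corrected operations were introduced. I would first record that both sides are arrows $0\to d_1c+(d_1c_1-d_1c)$ in $C_1$ with the same domain $0$ and the same codomain. Writing $A=1_{d_1c}+c_1+1_{-d_1c}$, the action definition presents the left side as $d(c)\cdot c_1=A\gamma_1$ for the unique special isomorphism $\gamma_1\colon 0\approx d_1c+(0-d_1c)$, while the interchange law applied to the factorizations $c=1_{d_1c}\circ c$, $c_1=c_1\circ 1_0$, $-c=1_{-d_1c}\circ(-c)$ gives $c+c_1+(-c)=A\circ\bigl(c+1_0+(-c)\bigr)$; unwinding the $\gamma,\kappa$ hidden in the $c\textbf{K}er d_0$-operations then exhibits $c+(c_1-c)$ as $A$ composed with a morphism built from $c+1_0+(-c)$. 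Since $c+(-c)\approx 0$ is a special isomorphism (the transformation $\delta$ evaluated on the arrow $c$), this residual factor is weak special isomorphic to $\gamma_1$, whence $c+(c_1-c)\approx d(c)\cdot c_1$ is a weak special isomorphism. As the two arrows share domain $0$ and codomain $d_1c+(d_1c_1-d_1c)$, the coherence remark recalled in Section~2 (a morphism weak special isomorphic to a given one is uniquely determined by its domain and codomain) forces $c+(c_1-c)=d(c)\cdot c_1$, which is strictness.

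Finally I would verify Condition~\ref{special cr}. Given $c\in c\textbf{K}er d_0$ and a congruence $\gamma\colon d(c)\sim r$ in $C_0$, I would regard $\gamma$ as an arrow $d_1c\to r$ and set $c'=\gamma c\colon 0\to r$; then $d_0(c')=0$, so $c'\in c\textbf{K}er d_0$, and $d(c')=r$, while $(1_0,\gamma)$ exhibits $c\approx c'$, i.e. $c'\sim c$. When $\gamma$ is a special congruence, $c'$ is weak special isomorphic to $c$, and the same coherence uniqueness shows $c'$ is the only element of $c\textbf{K}er d_0$ with codomain $r$ weak special isomorphic to $c$, which is precisely the uniqueness clause of Condition~\ref{special cr}. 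Collecting the four parts shows $(c\textbf{K}er d_0, C_0, d)$ is connected, strict and special, hence a cssc-crossed module. The only delicate step is the strictness argument, where the interplay between the interchange law and coherence must be handled carefully so as to produce an equality rather than a mere congruence.
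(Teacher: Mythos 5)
Your proposal is correct and its overall architecture coincides with the paper's: the same four-part verification, the same computation giving equality in condition (i) (precomposition by a special isomorphism does not change $d_1$), the same connectedness observation (all arrows in $c\textbf{K}er d_0$ have domain exactly $0$ and $\mathsf{C}$ is a groupoid, which also gives condition (ii) up to $\sim$ for free), and the identical construction $c'=\gamma c$ with the coherence-uniqueness argument for Condition~\ref{special cr}. The one place where you genuinely diverge is the strictness step. The paper's proof routes this through Lemma~\ref{comm}: it observes that $-c+i(dc)\in \Ker d_1$ and $c'\in \Ker d_0$, commutes them to get the weak special isomorphism $i(dc)+(c'-i(dc))\approx c+(c'-c)$, and then matches the domain-correcting special isomorphisms ($\theta\psi=\gamma$) by coherence. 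You instead factor both sides through $A=1_{d_1c}+c_1+1_{-d_1c}$ via the interchange law and compare the residual factor $c+1_0+(-c)$ with the special isomorphism $\gamma_1$ using $c+(-c)\approx 1_0$; note that this last isomorphism is, in the paper's terminology, a \emph{weak} special isomorphism (its components are the special isomorphisms $\delta_0$ and $\delta_{d_1c}$), not a special one. Both routes are really the same mechanism — Lemma~\ref{comm} is itself proved by interchange plus coherence — and both close with the same uniqueness-of-weak-special-isomorphic-morphisms argument once domains and codomains are seen to agree; your version inlines the commutation argument in a slightly different configuration, while the paper's version isolates it as a reusable lemma. Your sketch of "unwinding the $\gamma,\kappa$ hidden in the $c\textbf{K}er d_0$-operations" is the only step left at a level of detail below the paper's, but it fills in without difficulty.
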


\begin{proof}
	First we shall show that we have equality in the first condition of the crossed module $(c\textbf{K}er d_0, C_0, d)$. We have $d(r\cdot c)=d((i(r)+(c-i(r))\varepsilon )=d(i(r)+(c-i(r)))=r+(dc-r),$ where $\varepsilon: 0\rightarrow r+(0-r)$ is a special isomorphism. Now we shall show that we have equality in the second condition of a crossed module. First we compute the left side of the condition. We have $dc\cdot c'=(i(dc)+(c'-i(dc)))\gamma$ where $\gamma: 0\rightarrow dc+(0-dc)$ is a special isomorphism and $i(dc)+(c'-i(dc))$ is a morphism $dc+(0-dc)\rightarrow dc+(dc'-dc)$. We have $-c+idc\in \cKer d_1$ and $c'\in \cKer d_0;$ by Lemma \ref{comm} we obtain that there is a weak special isomorphism $(-c+idc)+c'\approx c'+(-c+idc);$ from this it follows that $idc+c'\approx c+c'-c+i(dc),$ which implies $id(c)+(c'-i(dc))\approx c+(c'-c);$ from which we obtain the  weak special isomorphism $id(c)\cdot c'\approx c+(c'-c).$  By the definition of a sum in $c\textbf{K}er d_0$ for the right side we have $c+(c'-c)=(c+(c'-c)\varphi)\psi,$  where $\varphi:0\rightarrow 0-0$ and $\psi:0\rightarrow 0+0$ are special isomorphisms. Here we have in mind that $d(-c)=-dc$ and $i(-dc)=-i(dc).$ Obviously we have a weak special isomorphism $(c+(c'-c)\varphi\approx i(dc)+(c'-i(dc))$, from which it follows that there is a special isomorphism between the domains of these morphisms $\theta: 0+0\rightarrow dc+(0-dc),$ such that $(i(dc) +(c'-i(dc)))\theta=c+(c'-c)\varphi.$ Here we applied that the codomains of these morphisms are equal. Since $\psi, \theta $ and $\gamma$ are special isomorphisms, we obtain that $\theta \psi=\gamma,$ from this it follows that $(c+(c'-c)\varphi)\psi=(i(dc)+(c'-i(dc)))\gamma,$  which means that for the c-crossed module $(c\textbf{K}er d_0, C_0, d)$ we have an equality in the second condition for c-crossed modules.
	
	The crossed module is connected by the definition of the object $c\textbf{K}er d_0$. Now we shall prove that this crossed module is a special c-crossed module. Let $c\in c\textbf{K}er d_0,$ and there is a congruence $\gamma: dc\sim r$, which means that $\gamma$ is an isomorphism in $C_1$. Take $c'=\gamma c$, then we will have $c'\approx c$ in $C_1,$ which means that $c'\sim c$ in $c\textbf{K}er d_0.$ Suppose $\gamma$ is a special congruence relation, then it is a special isomorphism in $C_1.$ From the coherence property of $\mathsf{C}$ we have that $\gamma$ is a unique special isomorphism from $dc$ to $r$ and therefore there is a unique morphism $d_0c\rightarrow r$, which is weak special isomorphic to $c$ and it is a composition $\gamma c$. Therefore $c'$ is unique with this property and $(c\textbf{K}er d_0, C_0, d)$ is a special c-crossed module. \qed
\end{proof}

\section*{Acknowledgements}
	The first author is grateful to Ercyies University (Kayseri, Turkey)  and Prof. Mucuk for invitations and to the Rustaveli National Science Foundation for financial support, grant GNSF/ST09 730 3 -105.

%
%



\end{document}